\newcommand\Z{{\mathbb Z}}
\newcommand\N{{\mathbb N}}
\renewcommand\P{{\mathcal P}}
\newcommand\B{{\mathcal B}}
\newcommand\R{{\mathcal R}}
\newcommand\F{{\mathcal F}}
\newcommand\EF{{\mathcal{E}}}
\newcommand\G{{\mathcal G}}
\newcommand\s{{\sigma}}
\newcommand\Aut{{\mathsf{Aut}}}
\newcommand\expn{{\mathsf{expn}}}
\newcommand\Sym{{\mathsf{S}}}
\theoremstyle{plain}
\newtheorem{theorem}{Theorem}
\newtheorem{proposition}{Proposition}
\newtheorem{lemma}{Lemma}
\theoremstyle{definition}
\newtheorem{definition}{Definition}
\newtheorem{example}{Example}
\begin{document}

\title{Pattern closure of groups of tree automorphisms}

\author{Zoran \v{S}uni\'c}

\thanks{Partially supported by NSF grant DMS-0805932}

\address{Department of Mathematics, Texas A\&M University, College Station, TX 77843-3368, USA}

\begin{abstract}
It is shown that a group defined by forbidding all patterns of size $s+1$ that do not appear in a given self-simialr group of tree automorphisms is the topological closure of a self-similar, countable, regular branch group, branching over its level $s$ stabilizer. 

As an application, it is shown that there are no infinite, finitely constrained, topologically finitely generated groups of binary tree automorphisms defined by forbidden patterns of size two. 
\end{abstract}

\keywords{closed self-similar groups, finitely constrained groups, patterns on trees, compact groups}
\subjclass[2000]{20E08, 22C05, 37B10}

\maketitle


\section*{Introduction}

The group of automorphisms of a regular rooted tree carries three structures, namely a self-similarity structure (related to symbolic dynamics on the tree), a metric structure (with Cantor set topology), and a group theoretic structure (of an iterated wreath product). Each of the tree structurers comes with a naturaly associated closure oerator. Namely, given a set $S$ of tree automorphisms, we may consider the self-similar closure of $S$ (the smallest self-similar set containing $S$), the topological closure of $S$, and the group closure of $S$ (the group generated by $S$). The study of the interaction of these three closures naturally leads to the study of patterns in tree automorphisms. 

The main results proved here are as follows. 

\begin{theorem}\label{t:contraction-survives}
Let $G$ be a finitely constrained group of tree automorphisms of $X^*$ defined by allowing all patterns of size $s+1$, $s \geq 0$, that appear in some self-similar group $K$ (and forbidding those that do not). Then $G$ is the topological closure (in $\Aut(X^*)$) of a self-similar, countable, regular branch group $H$, branching over its level $s$ stabilizer $H_s$.

Moreover, if $K$ is contracting, $H$ may be chosen to be contracting as well. 
\end{theorem}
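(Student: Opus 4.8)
The plan is the following. I would write $\EF$ for the set of allowed patterns, so that $G=G_\EF$ is the set of $g\in\Aut(X^*)$ all of whose size-$(s+1)$ patterns, at all vertices, lie in $\EF$, and write $\pi_m$ for the homomorphism recording the action on the top $m$ levels. Since $K$ is self-similar one has $k|_v\in K$ for all $k\in K$, $v\in X^*$, so every pattern occurring in $K$ already occurs at the root; hence $\EF=\pi_{s+1}(K)$ is a finite group, $G$ is a closed self-similar group containing $\overline K$, and — the place where self-similarity of $K$ is genuinely used — $\pi_{s+1}(G)=\EF$: if $p=\pi_{s+1}(k)$ with $k\in K$, then every pattern of $k$ is a pattern of some $k|_v\in K$, so $k\in G$. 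The basic tool is a grafting lemma: if $\gamma\in G$ fixes the top $s$ levels, then for every vertex $v$ the copy of $\gamma$ planted at $v$ (trivial off the cone $vX^*$, equal to $\gamma$ on it) again lies in $G$, because each of its size-$(s+1)$ patterns is either trivial — allowed, since $1\in K$ — or a pattern of $\gamma$; the hypothesis $\gamma\in\mathrm{St}_G(s)$ is exactly what makes the patterns at the ancestors of $v$ trivial, and is the reason the branching in the statement is over the level-$s$ stabilizer.

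Next I would fix, for each $p\in\EF$, a lift $\kappa(p)\in K\subseteq G$ with $\pi_{s+1}(\kappa(p))=p$, and for $g\in G$ and $n\ge s$ I would form $R_n(g)$, the automorphism agreeing with $g$ on the top $n-s$ levels and equal, below each level-$(n-s)$ vertex $v$, to $\kappa(\pi_{s+1}(g|_v))$. A short pattern computation gives $R_n(g)\in G$: at a vertex $u$ of level $\ge n-s$ the size-$(s+1)$ pattern of $R_n(g)$ is a pattern of one of the lifts, while at a vertex of level $<n-s$ it coincides with the pattern of $g$, since $R_n(g)$ and $g$ agree on the top $s+1$ levels below $u$ — they agree down to level $n-s$, and past that the relevant lift agrees with $g|_v$ for another $s+1$ levels. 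Moreover $\pi_n(R_n(g))=\pi_n(g)$. I would then take $H$ to be the smallest subgroup of $G$ that contains all $\kappa(p)$ and all $R_n(g)$, is self-similar, and is closed under planting its own level-$s$-fixing elements. Since $\pi_n(g)$ and $\pi_{s+1}(g|_v)$ depend only on $g$ modulo a fixed finite quotient, there are only countably many distinct $\kappa(p)$ and $R_n(g)$, and their sections and the required planted elements are countably many more, so $H$ is countable; it is self-similar and contained in $G$ by construction (the planted elements lie in $G$ by the grafting lemma); it is regular branch over $H_s=\mathrm{St}_H(s)$, which has finite index; and $\pi_n(H)\supseteq\{\pi_n(R_n(g)):g\in G\}=\pi_n(G)$ for every $n$, so $\overline H=G$. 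This establishes the first assertion.

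For the ``moreover'', suppose $K$ is contracting, with nucleus $\mathcal N$. Then every element of $K$ is finite-state, so each lift $\kappa(p)$ may be chosen finite-state with all deep sections in $\mathcal N$; consequently every $R_n(g)$ is finite-state with deep sections in $\mathcal N$ (its top $n-s$ levels are finitary, and below level $n-s$ it is built from the chosen lifts), and likewise every section of an $R_n(g)$ and every planted copy of such an element. Thus the generating set of $H$ consists of finite-state automorphisms contracting into $\mathcal N$. A sufficiently deep section of a length-$r$ word in these generators is a product of at most $r$ elements of $\mathcal N$, hence lies in $\langle\mathcal N\rangle\subseteq K$; since there are finitely many such products for fixed $r$ and $K$ contracts each of them into $\mathcal N$ uniformly, every element of $H$ is finite-state with deep sections in $\mathcal N$. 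Hence $H$ is contracting, and retains all the earlier properties.

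I expect the main obstacle to lie in the density statement $\overline H=G$, and specifically in getting the resetting operation exactly right so that $R_n(g)$ really stays in $G$ — the pattern computation at the levels around $n-s$ is where the size $s+1$ must be used with care — together with, in the contracting case, checking that the passage to the closure under planting (forced in order to make the branch structure close up) does not destroy contraction; the last point works only because planting, taking sections, and products of boundedly many nucleus elements all keep one inside $\langle\mathcal N\rangle$, where the contraction of $K$ applies.
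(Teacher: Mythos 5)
Your proposal is correct, and it shares with the paper the central device of planting level-$s$-stabilizing elements at arbitrary vertices (your grafting lemma is precisely the paper's $\delta_u(h)$ construction, with the same observation that membership in the level-$s$ stabilizer is what trivializes the patterns at the ancestors of the planting vertex). The two arguments diverge, however, in how $H$ is generated and, more importantly, in how density is obtained. The paper chooses a finite set $S\subseteq K$ realizing every allowed root pattern, puts $L=\langle\tilde S\rangle$ and $L_s=\langle S'\rangle$, and takes $H=\langle \tilde S\cup D\rangle$ with $D=\{\delta_u(h): h\in S',\ u\in X^*\}$; Lemma~\ref{l:delta-conjugates} shows $\langle D\rangle$ is normal in $H$, whence $H_s=\langle D\rangle$, and since planting an element of $\langle D\rangle$ again lands in $D$, a single explicit layer of planting closes up the branch structure, so countability is immediate. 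Density $\overline H=G$ is then delegated to Theorem~\ref{t:when-fc}: the closure of a self-similar regular branch group over $H_s$ is the finitely constrained group defined by its size-$(s+1)$ patterns, which for this $H$ are exactly those of $L$, hence of $K$. You instead prove density by hand via the resetting elements $R_n(g)$, whose pattern verification you carry out correctly; this makes the density step self-contained (it in effect reproves the needed instance of the cited result), at the price of a less explicit $H$ defined by an iterated closure under sections and planting — your countability claim is right but really rests on a countable union of countable stages, since the elements eligible for planting are only identified as the closure is formed, and you should say so. Your contracting argument (generators finite-state with deep sections in $\mathcal N$, products of boundedly many nucleus elements contracting uniformly inside $K$) is the standard one and matches the paper's in substance, though the paper gets it more cheaply because adjoining $\mathcal N$ to $S$ makes $\tilde S$ finite outright.
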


As an application of Theorem~\ref{t:contraction-survives}, we prove the following. 

\begin{theorem}\label{t:no2}
There are no infinite, finitely constrained, topologically finitely generated groups of binary tree automorphisms defined by forbidden patterns of size at most 2.
\end{theorem}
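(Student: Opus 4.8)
The plan is to apply Theorem~\ref{t:contraction-survives} with $s=1$ and then obstruct topological finite generation by a rank count, exploiting that on a binary tree the patterns of size $2$ form the $8$-element group $\Aut(X^{[2]})\cong\Sym(X)\wr\Sym(X)$ (automorphisms of the depth-$2$ rooted tree). Write $d(\cdot)$ for the minimal number of topological generators. Every closed subgroup of $\Aut(X^*)$ with $|X|=2$ is pro-$2$, so $d(M\times M)=2\,d(M)$ for such groups, and for every open subgroup $U\le M$ the Reidemeister--Schreier estimate $d(U)\le[M:U]\bigl(d(M)-1\bigr)+1$ holds. First I would normalize the hypothesis: a finitely constrained group $G$ of binary tree automorphisms defined by forbidding patterns of size at most $2$ equals $G(P)$ for some subgroup $P\le\Aut(X^{[2]})$ (a size-$1$ pattern group may be pulled back along the truncation $\Aut(X^{[2]})\to\Aut(X^{[1]})$); any such $G$ is self-similar and equals $G(\pi_2(G))$, so we may assume $P=\pi_2(G)$ is \emph{full}. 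Assume for contradiction that $G$ is infinite with $d(G)<\infty$.

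By Theorem~\ref{t:contraction-survives} with $s=1$, $G$ is the closure of a self-similar group that is regular branch over its level-$1$ stabilizer; I would then pin down the branching data by hand. Since $G$ is infinite and self-similar it acts nontrivially on level $1$, so $G_1:=\mathrm{St}_G(1)$ has index $2$ in $G$ and $d(G_1)\le2\,d(G)-1<\infty$. Let $G_2:=\mathrm{St}_G(2)$. The section decomposition carries $G_2$ into $G_1\times G_1$; conversely, for $h_0,h_1\in G_1$ the automorphism fixing level $1$ and acting by $h_0,h_1$ on the two maximal subtrees lies in $G$, since its only pattern not already occurring in $h_0$ or $h_1$ is the root pattern, which is trivial (the level-$2$ letters vanish, $h_0,h_1$ fixing level $1$) and hence in $P$. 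Therefore $G_2\cong G_1\times G_1$, and $q:=[G_1:G_2]$ is the order of $P\cap\mathrm{St}_{\Aut(X^{[2]})}(1)\le\Sym(X)\times\Sym(X)$ (fullness realizing each such pattern by an element of $G_1$); thus $q\in\{1,2,4\}$. If $q=1$ then $|P|\le 2$ and $G$ is finite, a contradiction. If $q=4$ then $P\supseteq\mathrm{St}_{\Aut(X^{[2]})}(1)$, and since the latter subgroup is not full (it yields the trivial group) this forces $P=\Aut(X^{[2]})$, i.e., $G=\Aut(X^*)$.

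It remains to contradict $d(G)<\infty$ when $q=2$ and when $G=\Aut(X^*)$. If $q=2$, then $G_2\cong G_1\times G_1$ has index $2$ in $G_1$, so $2\,d(G_1)=d(G_2)\le2\bigl(d(G_1)-1\bigr)+1=2\,d(G_1)-1$, impossible because $d(G_1)<\infty$. If $G=\Aut(X^*)$, then $G_1=\mathrm{St}_G(1)$ is literally $\Aut(X^*)\times\Aut(X^*)=G\times G$, so $d(G_1)=2\,d(G)$, again contradicting $d(G_1)\le2\,d(G)-1$. This proves Theorem~\ref{t:no2}.

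The step requiring real care --- more bookkeeping than depth --- is the middle one: the identification $G_2\cong G_1\times G_1$ and the bound $q=[G_1:G_2]\le 2$ outside the trivial exception $G=\Aut(X^*)$, which genuinely uses both the binary alphabet and the ceiling $2$ on pattern size. The crux is that \emph{both} admissible values $q\in\{2,4\}$ place $G$ in the forbidden regime ``a pro-$2$ group contains a copy of its own square with index $2$ inside an index-$2$ overgroup'', incompatible with topological finite generation. For larger pattern sizes this fails --- the closure of the Grigorchuk group is finitely constrained, infinite, and topologically finitely generated, with patterns of size $5$ --- so the smallness of $\Aut(X^{[2]})$ is precisely what forces Theorem~\ref{t:no2}.
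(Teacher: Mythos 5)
Your argument is correct, but it takes a genuinely different route from the paper's. The paper first reduces to an explicit list: since an infinite self-similar closed subgroup of $\Aut(X^*)$ must be level-transitive, only the transitive subgroups of $\Aut(X^2)\cong D_4$ can serve as full pattern groups, giving exactly $\Aut(X^*)$, $\G(\R)$ and $\G(\B)$; it then handles each one by constructing the countable dense branch subgroup $H$ of Theorem~\ref{t:contraction-survives} and exhibiting, via an exponent-sum bookkeeping on decompositions of group words, surjections $H/H_n\twoheadrightarrow A_n$ onto abelian $2$-groups of unbounded rank. You instead avoid both the classification and the explicit dense subgroup: you derive $G_2=G_1\times G_1$ directly from the pattern description (your invocation of Theorem~\ref{t:contraction-survives} is actually decorative --- the branching you need is verified by hand), observe $[G_1:G_2]=|P\cap\mathrm{St}_{\Aut(X^{[2]})}(1)|\in\{1,2,4\}$, dispose of $q=1$ (finite) and $q=4$ ($G=\Aut(X^*)$) by elementary fullness considerations, and kill the remaining cases with the pro-$2$ Frattini identity $d(M\times M)=2d(M)$ against the Schreier index bound $d(U)\le[M:U](d(M)-1)+1$. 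I checked the delicate points and they hold: the all-trivial root pattern lies in $P$ because $P$ is a group, so $(h_0,h_1)\in G_2$ for $h_0,h_1\in G_1$; the subgroups of $D_4$ containing the Klein subgroup $\mathrm{St}(1)$ are only itself (not full) and $D_4$; and stabilizers are open, so the profinite Schreier formula applies. Your approach buys brevity and uniformity (it treats all full $P$ with $q=2$ at once, transitive or not, and reproves $d(\Aut(X^*))=\infty$ along the way), at the cost of importing two standard profinite facts that you should cite or prove (the Frattini-quotient computation of $d$ for pro-$p$ groups and the index formula for open subgroups); the paper's approach is longer but constructive, and its exponent-sum computation is what generalizes to the groups $G(k,s+1)$ for all $k$ and $s$ in Proposition~\ref{p:odometers}, which your rank-versus-index trick does not reach once $[G_1:G_2]$ exceeds the index $[G:G_1]$.
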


Note that the closure of the first Grigorchuk group is an infinite, finitely constrained, topologically finitely generated group of binary tree automorphisms defined by patterns of size 4~\cite{grigorchuk:unsolved}. The closures of the groups defined by polynomials in~\cite{sunic:hausdorff} provide examples of infinite, finitely constrained, topologically finitely generated groups of binary tree automorphisms defined by patterns of size $s$ (but not size $s-1$), for any $s \geq 4$. Thus, by Theorem~\ref{t:no2}, the question of existence of infinite, finitely constrained, topologically finitely generated groups of binary tree automorphisms defined by patterns of size $s$ remains open only for $s=3$.  

The necessary background on groups of automorphisms of rooted regular trees is provided in the next two sections, which are followed by a section in which the main results are proved. More extensive background infromation may be found in~\cite{grigorchuk:unsolved,nekrashevych:book-self-similar,grigorchuk-s:standrews,bartholdi-g-s:branch}. 


\section{Background on symbolic dynamics on rooted trees}

\subsection{Rooted trees}
Let $X$ be a finite alphabet of cardinality $k$ (our standard choice is $X=\{0,1,\dots,k-1\}$). The rooted tree over $X$ is the $k$-ary rooted tree in which the vertices are the finite words over $X$, the empty word $\emptyset$ is the root and every vertex $u$ is connected by $k$ directed edges to its $k$ children $ux$, for $x$ in $X$. The edge connecting $u$ to $ux$ is labeled by $x$. Level $n$ of the tree $X^*$ is the set $X^n$ of words of length $n$ over $X$. We use $X^*$ to denote both the rooted tree over $X$ and the set of all words over $X$. Note also that the rooted tree $X^*$ is the right
Cayley graph of the free monoid $X^*$ over $X$.

\subsection{Portrait space}

Let $A$ be a finite alphabet (in order to avoid confusion, this alphabet is usually disjoint from $X$). The portrait space on the tree $X^*$ over the alphabet $A$ is the space $A^{X^*}$ of all maps from $X^*$ to $A$.  This space is also called the shift space or the full shift space on $X^*$ over the alphabet $A$. The elements of $A^{X^*}$ are called portraits ($X$-tree portraits over $A$). For a portrait $g$ in the portrait space, denote by $g_{(u)}$ the symbol from $A$ at vertex $u$ in the tree (note that $(u)$ is in the subscript position with respect to $g$). The symbol $g_{(u)}$ is sometimes called the decoration at $u$ in the portrait $g$ and the alphabet $A$ is called the decoration alphabet. 

The portrait space $A^{X^*}$ is a metric space in which, for distinct portraits $g$ and $h$, the distance is given by
\[
 d(g,h) =
 \sup \left\{\frac{1}{2^{|u|}} \mid \ u \in X^*, \ g_{(u)} \neq h_{(u)} \right\}.
\]
The topology on $A^{X^*}$ is just the product topology on $A^{X^*}$ induced by the discrete finite space $A$. Thus, as long as $|A| \geq 2$, $A^{X^*}$ is a Cantor set (in particular, it is compact).

For $u$ in $X^*$, the section map $\s_u:A^{X^*} \to A^{X^*}$ at $u$ (also known as the shift map) on the portrait space is defined by
\[ (\s_u(g))_{(v)} = g_{(uv)}. \]
The section maps provide a right action of $X^*$ on the portrait space by continuous maps. Note that, more generally, portrait spaces may be defined over any semigroup (not only over the free monoid $X^*$ as defined here; see for instance~\cite{coornaert-p:b-symbolic}). 

\begin{definition}
A set of portraits is self-similar if it is invariant under the section maps. Other terms used for self-similar sets are
$X^*$-invariant or shift invariant sets.
\end{definition}

Note that the case $k=1$ is not excluded from our considerations. In this case the tree $X^*$ has the structure of a ray (one-way inifinite path), the monoid $X^*$ is isomorphic to the monoid of natural numbers $\N$ and the shift space $A^\N$ is just the standard one-dimensional one-way shift (see~\cite{lind-m:book-symbolic} or ~\cite{kitchens:book-symbolic}). 

\subsection{Forbidden patterns}

Let $s \geq 1$. Rooted tree of size $s$ over $X$, is the subtree of $X^*$ consisting of the vertices in $X^{[s]} = \cup_{i=0}^{s-1} X^i$ (we denote this subtree also by $X^{[s]}$). An $X$-tree pattern of size $s$ over $A$ is a map in $A^{X^{[s]}}$. All 8 $X$-tree patterns, where $X=\{0,1\}$, of size 2 over $A=\{\square,\blacksquare\}$ are presented in Figure~\ref{f:d4}.
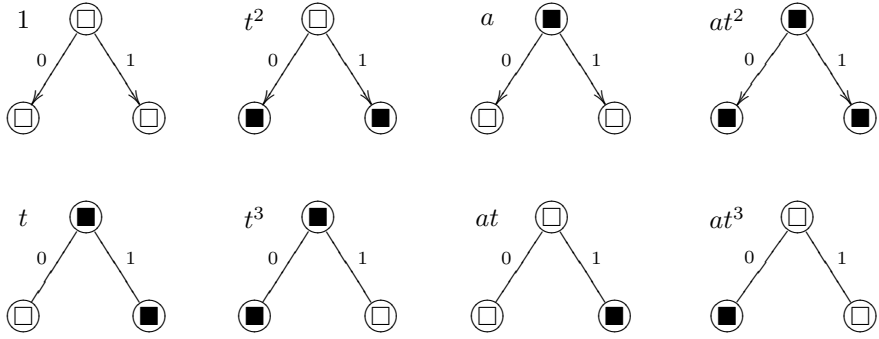
\begin{figure}[!ht]
\[
\xymatrix@C=10pt{
 1& *+[o][F-]{\square} \ar@{->}[dl]_{0}\ar@{->}[dr]^{1}&  &&
 t^2& *+[o][F-]{\square} \ar@{->}[dl]_{0}\ar@{->}[dr]^{1}&  &&
 a& *+[o][F-]{\blacksquare} \ar@{->}[dl]_{0}\ar@{->}[dr]^{1}&  &&
 at^2& *+[o][F-]{\blacksquare} \ar@{->}[dl]_{0}\ar@{->}[dr]^{1}&  &&
 \\
 *+[o][F-]{\square} && *+[o][F-]{\square} &&
 *+[o][F-]{\blacksquare} && *+[o][F-]{\blacksquare} &&
 *+[o][F-]{\square} && *+[o][F-]{\square} &&
 *+[o][F-]{\blacksquare} && *+[o][F-]{\blacksquare}
 \\
 t& *+[o][F-]{\blacksquare} \ar@{-}[dl]_{0}\ar@{-}[dr]^{1} & &&
 t^3& *+[o][F-]{\blacksquare} \ar@{-}[dl]_{0}\ar@{-}[dr]^{1} && &
 at& *+[o][F-]{\square} \ar@{-}[dl]_{0}\ar@{-}[dr]^{1} & &&
 at^3& *+[o][F-]{\square} \ar@{-}_{0}[dl]\ar@{-}[dr]^{1} &
 \\
 *+[o][F-]{\square} && *+[o][F-]{\blacksquare} &&
 *+[o][F-]{\blacksquare} && *+[o][F-]{\square} &&
 *+[o][F-]{\square} && *+[o][F-]{\blacksquare} &&
 *+[o][F-]{\blacksquare} && *+[o][F-]{\square}
}
\]
 \caption{Patterns of size $2$}
 \label{f:d4}
\end{figure}
A tree portrait $g$ contains the tree pattern $p$ of size $s$ at the vertex $u$ if $g_{(uv)} = p_{(v)}$, for $v \in X^{[s]}$.

Let $\F$ be any set of $X$-tree patterns over $A$. Denote by $\G(\F)$ the set of all portraits in the portrait space $A^{X^*}$ that do not contain any pattern from $\F$ at any vertex. A subset $G$ of the portrait space is defined by a set of forbidden patterns if $G=\G(\F)$ for some set of tree patterns $\F$. The set $\F$ is called the set of forbidden tree patterns defining $G$. 

\begin{theorem}\label{when-patterns}
Let $G$ be a set of $X$-tree portraits over $A$. The following are
equivalent.

\text{(i)} $G$ is closed, self-similar subset of the full tree portrait space $A^{X^*}$.

\textup{(ii)} $G$ is defined by a set of forbidden $X$-tree
patterns.
\end{theorem}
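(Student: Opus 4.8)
The plan is to prove the two implications separately, with the bulk of the work going into (i)$\Rightarrow$(ii).

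For (ii)$\Rightarrow$(i): Suppose $G = \G(\F)$. Self-similarity is immediate: if $g$ contains no pattern from $\F$ at any vertex, then for any $u$, the section $\s_u(g)$ satisfies $(\s_u(g))_{(vw)} = g_{(uvw)}$, so a pattern of $\F$ occurring in $\s_u(g)$ at $v$ would already occur in $g$ at $uv$; hence $\s_u(g) \in G$. Closedness follows because containing a fixed pattern $p$ of size $s$ at a fixed vertex $u$ is a condition on finitely many coordinates $g_{(uv)}$, $v \in X^{[s]}$, hence is a clopen condition; so the set of portraits containing $p$ at $u$ is clopen, its complement is clopen, and $G$ is an intersection of clopen sets, hence closed.

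For (i)$\Rightarrow$(ii): The natural choice is to let $\F$ be the set of \emph{all} tree patterns (of all sizes $s \geq 1$) that do \emph{not} occur in any portrait of $G$ at any vertex --- equivalently, a pattern $p$ of size $s$ belongs to $\F$ iff no $g \in G$ contains $p$ at any $u \in X^*$. By construction $G \subseteq \G(\F)$, since no element of $G$ contains any forbidden pattern. The content is the reverse inclusion $\G(\F) \subseteq G$. So fix $h \in \G(\F)$; I must produce $h \in G$. For each $n$, consider the restriction of $h$ to $X^{[n+1]}$, which is a pattern $p_n$ of size $n+1$. Since $p_n$ occurs in $h$ at the root and $h$ avoids $\F$, we have $p_n \notin \F$, so by definition of $\F$ there exists some $g \in G$ and some vertex $u_n$ such that $g$ contains $p_n$ at $u_n$; by self-similarity, $\s_{u_n}(g) \in G$ and $\s_{u_n}(g)$ agrees with $h$ on all of $X^{[n+1]}$, i.e. $d(\s_{u_n}(g), h) \leq 1/2^n$. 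Thus $h$ is a limit of points of $G$, and since $G$ is closed, $h \in G$. This completes the argument.

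The one point requiring a little care --- and the step I expect to be the main (minor) obstacle --- is making sure the definition of $\F$ interacts correctly with \emph{all} sizes simultaneously, and that "not occurring in $G$" is the right closure-theoretic notion: one must check that a portrait avoiding $\F$ at every vertex really has each of its finite truncations realized somewhere inside $G$, which is exactly where self-similarity (shifting by $u_n$) is essential --- without it, knowing $p_n$ occurs somewhere in some $g\in G$ would not place an element of $G$ near $h$. A secondary bookkeeping point is that one may restrict $\F$ to patterns of a single large enough size if $G$ happens to be finitely constrained, but for the general statement allowing $\F$ to contain patterns of every size is both harmless and natural. No compactness of $A^{X^*}$ is actually needed beyond the metric/closedness argument already given.
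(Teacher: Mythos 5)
Your proof is correct. The paper states this theorem as background without giving a proof, and your argument---clopen cylinder conditions for closedness, shift-compatibility of pattern avoidance for self-similarity, and, for the converse, forbidding all patterns not occurring in $G$ and using self-similarity to shift a realization of each finite truncation of $h$ to the root so that closedness forces $h \in G$---is exactly the standard one the author is implicitly relying on.
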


Closed self-similar sets of portraits are called portrait subspaces (or sometimes portrait spaces, shifts, or subshifts). A portrait space defined by finitely many forbidden patters is called a portrait space of finite type.

\begin{example}\label{ex:d4}
Let $X=\{0,1\}$, $A=\{\square,\blacksquare\}$ and consider the $X$-tree patterns of size 2 over $A$ provided in Figure~\ref{f:d4}.

If we forbid the patterns in the bottom row, i.e., we define the set of forbidden patterns $\B = \{t,t^3,at,at^3\}$, the automorphisms in the corresponding portrait space of finite type $\G(\B)$ can be characterized as follows. A portrait $g$ belongs to $\G(\B)$ if and only if, for every vertex $u$ in $X^*$,
\[ g_{(u0)} = g_{(u1)}. \]

Similarly, we may forbid the patterns in the right half of Figure~\ref{f:d4}, i.e., we define the set of forbidden patterns $\R=\{a,at^2,at,at^3\}$. A portrait $g$ belongs to the portrait space of finite type $\G(\R)$ if and only if, for every vertex $u$ in $X^*$,
\[ g_{(u)}+g_{(u0)}+g_{(u1)}=0, \]
where we interpret the addition and the equality modulo 2, and we interpret $\square$ as 0 and $\blacksquare$ as 1.
\end{example}


\section{Background on groups of tree automorphisms}

Let $X^*$ be a rooted k-ary tree tree. We consider the special case when the alphabet $A$ is the finite symmetric group $\Sym(X)$. i.e., the case when the decoration at each vertex of the tree is a permutation of the alphabet $X$. Every portrait $g$ of this type defines a rooted tree automorphism of $X^*$, also denoted by $g$, defined by 
\[
 g(x_1x_2 \dots x_n) = g_{(\emptyset)}(x_1) g_{(x_1)}(x_2) \dots g_{(x_1x_2 \dots x_{n-1})}(x_n).   
\]
Conversely, if $g$ is a tree automorphism, it defines a portrait on $X^*$, also denoted by $g$, where the permutation of $X$ at the vertex $u$ is uniquely determined by 
\[
 g_{(u)}(x)=y \iff g(ux)=g(u)y,
\]
for $x$ and $y$ in $X$. 

The group $\Aut(X^*)$ of rooted tree automorphisms of $X^*$ inherits the self-similarity and the metric structure from the $X$-tree portrait space $\Sym(X)^{X^*}$. In particular, $\Aut(X^*)$ is compact and so is each of its closed subgroups. 

Note that $\Aut(X^*)$ has the structure of an iterated permutational wreath product
\[
 \Aut(X^*) \cong \Sym(X) \ltimes (\Aut(X^*))^X = \Sym(X) \wr \Aut(X^*) = \Sym(X) \wr ( \Sym(X) \wr ( \Sym (X) \wr \dots )), 
\]
where the isomorphism $\Aut(X^*) \cong \Sym(X) \ltimes (\Aut(X^*))^X$ is given by 
\[
 g \mapsto g_{(\emptyset)} (g|_0,g|_1,\dots,g|_{k-1}),  
\]
and, for $x \in X$, the automorphism $g|_x$ is just the section $\sigma_x(g)$ of $g$ at $x$. If we identify $\Aut(X^*)$ and $\Sym(X) \ltimes (\Aut(X^*))^X$ under this isomorphism then, for any two automorphims $g$ and $h$, 
\[
 gh = g_{(\emptyset)} (g_0,\dots,g_{k-1}) h_{(\emptyset)} (h_0,\dots,h_{k-1}) = 
 g_{(\emptyset)}h_{(\emptyset)} (g_{h(0)}h_0,\dots,g_{h(k-1)}h_{k-1}). 
\]
We will make use of the equalities 
\[
 (f \cdot g)|_u = f|_{g(u)} \cdot g|_u \qquad\text{and}\qquad (g^{-1})|_u = (g|_{g^{-1}(u)})^{-1}
\]
expressing the sections of products and inverses as products and inverses of appropriate sections. 

For a set of tree automorphisms $S$ we may define the group $\langle S \rangle$ generated by $S$, the (topological) closure $\overline{S}$ of $S$ and the smallest self-similar set $\tilde{S}$ of tree automorphisms containing $S$ (it consist of all sections of all elements in $S$). Further, we can combine these closure operators. For instance, the closure of a group of tree automorphisms is a group and therefore $\overline{\langle S\rangle}$ is the smallest closed group containing $S$. Similarly, the closure of a self-similar set is self-similar and therefore $\overline{\tilde{S}}$ is the smallest closed self-similar set containing $S$. Finally, a group of tree automorphisms generated by a self-similar set is self-similar and therefore $\langle \tilde{S} \rangle$ is the smallest self-similar group containing $S$. The smallest closed self-similar group of tree automorphisms containing $S$ is $\overline{\langle \tilde{S} \rangle}$.

A closed group $G$ of tree automorphisms is topologically finitely generated if $G=\overline{\langle S\rangle}$ for some finite set $S$. A tree automorphism $g$ is a finite-state automorphism (we also say that $g$ is defined by a finite automaton) if $\tilde{g}$ (the set of sections of $g$) is finite. A group $G$ of tree automorphisms is called an automaton group if it is generated by a finite self-similar set, i.e., $G = \langle \tilde{S} \rangle$, where $\tilde {S}$ is finite. A self-similar group $G$ of tree automorphisms is contracting if there exist a finite set $\mathcal N$ of automorphisms such that, for every $g$ in $G$, there exists a level $n$ (depending on $g$) such that, for all $m \geq n$, sections of $g$ at level $m$ are elements of $\mathcal N$. Note that finitely generated contracting groups are automaton groups (each element of a contracting group has only finitely many distinct sections, so it is a finite-state automorphism). 

\begin{proposition}
Let $G=\G(\F)$ be a closed, self-similar subset of the tree portrait space $\Sym(X)^{X^*}$ defined by a set of forbidden patterns $\F$. The set $G$ is a subgroup of $\Aut(X^*)$ if and only if, for every $s \geq 1$, the set of essential patterns $\EF_s$ of size $s$ (patterns of size $s$ that actually appear in some element of $G$) forms a subgroup of $\Aut(X^s)$ (the automorphism group of the finite regular tree over $X$ of depth $s$).

In case $\F$ is a finite set of patterns of size $s$, $G$ is a group if and only if $\EF_s$ is a subgroup of $\Aut(X^s)$.
\end{proposition}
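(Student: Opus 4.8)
The plan is to route everything through the natural projection $\pi_s\colon\Aut(X^*)\to\Aut(X^s)$ sending a tree automorphism to its action on the finite depth-$s$ tree --- equivalently, to its size-$s$ pattern $(g_{(v)})_{v\in X^{[s]}}$ at the root --- which is a group homomorphism. Two elementary observations then do the work. First, a portrait $g$ contains the size-$s$ pattern $p$ at the vertex $u$ exactly when $\pi_s(\sigma_u(g))=p$; since $G$ is self-similar, $\sigma_u(g)\in G$ for every $u$ and every $g\in G$, so the essential patterns are precisely the image, $\EF_s=\pi_s(G)$. Second, a forbidden pattern cannot appear in any element of $\G(\F)$, so $\EF_t\cap\F_t=\emptyset$ for every $t$, writing $\F_t$ for the set of size-$t$ patterns in $\F$. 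Combining the two yields the membership criterion
\[
 g\in G \iff \pi_t(\sigma_u(g))\in\EF_t \text{ for all } u\in X^* \text{ and all } t\ge 1 ,
\]
the nontrivial direction being: if $\pi_t(\sigma_u(g))\in\EF_t$ for all $u,t$, then by disjointness $\pi_t(\sigma_u(g))\notin\F_t$ for all $u,t$, so $g$ avoids $\F$.

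The forward implication of the proposition is then immediate: if $G$ is a subgroup of $\Aut(X^*)$, then $\EF_s=\pi_s(G)$ is the image of a subgroup under a homomorphism, hence a subgroup of $\Aut(X^s)$, for every $s\ge 1$. For the converse, suppose every $\EF_s$ is a subgroup; in particular $\EF_1\ne\emptyset$, so $G\ne\emptyset$. Given $g,h\in G$, a vertex $u$ and an integer $t\ge 1$, I would combine the section identities $(gh)|_u=g|_{h(u)}\cdot h|_u$ and $(g^{-1})|_u=(g|_{g^{-1}(u)})^{-1}$ noted above with the fact that $\pi_t$ is a homomorphism to obtain
\[
 \pi_t(\sigma_u(gh))=\pi_t(\sigma_{h(u)}(g))\cdot\pi_t(\sigma_u(h)), \qquad
 \pi_t(\sigma_u(g^{-1}))=\bigl(\pi_t(\sigma_{g^{-1}(u)}(g))\bigr)^{-1}.
\]
By self-similarity every section occurring on the right-hand side lies again in $G$, hence each factor lies in $\EF_t=\pi_t(G)$; as $\EF_t$ is a subgroup, the right-hand sides lie in $\EF_t$ too. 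By the criterion above, $gh\in G$ and $g^{-1}\in G$ (so also $g g^{-1}=e\in G$), and therefore $G$ is a subgroup of $\Aut(X^*)$.

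For the final addendum, when $\F$ consists of finitely many patterns all of size $s$, we have $\F_t=\emptyset$ for $t\ne s$, so $G=\G(\F)$ is constrained only by size-$s$ patterns and the criterion reads $g\in G\iff\pi_s(\sigma_u(g))\in\EF_s$ for all $u$; running the product and inverse computations above with $t$ fixed equal to $s$ then shows directly that $\EF_s$ being a subgroup forces $G$ to be a subgroup, and the reverse implication is the $s$-instance of the general forward direction established above. I do not anticipate a genuine obstacle here: the argument is essentially the identification $\EF_s=\pi_s(G)$ --- the one place where self-similarity of $G$ is used --- together with the observation that the section formulas for products and inverses are compatible with the projections $\pi_t$. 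The only point deserving care is the membership criterion: one must check that ``$\pi_t(\sigma_u(g))\in\EF_t$ for all $u$ and $t$'' really is equivalent to $g\in\G(\F)$, which relies on the disjointness $\EF_t\cap\F_t=\emptyset$ and on $\EF_t$ being the exact image $\pi_t(G)$ rather than the possibly larger complement $\Aut(X^t)\setminus\F_t$.
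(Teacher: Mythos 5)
Your proof is correct, and since the paper states this proposition without giving any proof, there is nothing to compare it against; your route --- identifying $\EF_s$ with $\pi_s(G)$ via self-similarity, establishing the membership criterion through the disjointness $\EF_t\cap\F_t=\emptyset$, and then pushing the section formulas for products and inverses through the homomorphisms $\pi_t$ --- is the natural and complete argument. The one point that genuinely needed care, namely that the criterion uses the exact image $\EF_t=\pi_t(G)$ rather than the possibly larger set $\Aut(X^t)\setminus\F_t$, is exactly the point you identified and handled.
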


\begin{theorem}
Let $G$ be a group of tree automorphisms of $X^*$. The following are equivalent.

\textup{(i)} The group $G$ is closed, self-similar subgroup of $\Aut(X^*)$.

\textup{(ii)} The group $G$ is defined by a set of forbidden patterns.
\end{theorem}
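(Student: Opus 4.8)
The plan is to obtain this theorem as a direct consequence of Theorem~\ref{when-patterns}, applied with the special decoration alphabet $A = \Sym(X)$. The key observation to record first is that, via the correspondence between tree automorphisms and portraits described above, a subgroup $G$ of $\Aut(X^*)$ is the very same object as a subset of the portrait space $\Sym(X)^{X^*}$; and since $\Aut(X^*)$ inherits both its topology and its section maps from $\Sym(X)^{X^*}$, the words \emph{closed} and \emph{self-similar} have the same meaning for $G$ whether read inside $\Aut(X^*)$ or inside the full portrait space. The phrase ``defined by a set of forbidden patterns'' refers to exactly the construction $G = \G(\F)$ of Theorem~\ref{when-patterns}, now with patterns being maps $X^{[s]} \to \Sym(X)$.

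Granting this, the implication (i)$\Rightarrow$(ii) is immediate: if $G$ is a closed self-similar subgroup of $\Aut(X^*)$, then, regarded as a subset of $\Sym(X)^{X^*}$, it is a closed self-similar subset, so Theorem~\ref{when-patterns} supplies a set $\F$ of $X$-tree patterns over $\Sym(X)$ with $G = \G(\F)$. For (ii)$\Rightarrow$(i): if $G = \G(\F)$ for some set $\F$ of patterns, then Theorem~\ref{when-patterns} tells us that $G$ is a closed self-similar subset of $\Sym(X)^{X^*}$; combined with the standing hypothesis of the theorem that $G$ is a subgroup of $\Aut(X^*)$, we conclude that $G$ is a closed self-similar subgroup.

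The one point that must not be dropped is the standing assumption that $G$ is a group. Being of the form $\G(\F)$ does not by itself force a subset of $\Sym(X)^{X^*}$ to be a subgroup — that is precisely the content of the Proposition above, which characterizes when $\G(\F)$ is a group in terms of the essential patterns forming subgroups of the finite tree automorphism groups $\Aut(X^s)$. Consequently, in part (ii) the hypothesis that $G$ is a group of tree automorphisms is used in an essential way for the direction (ii)$\Rightarrow$(i). Apart from keeping track of this, there is no real obstacle here; all of the substance has already been carried out in the proof of Theorem~\ref{when-patterns}.
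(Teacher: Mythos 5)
Your proof is correct and is exactly the intended argument: the paper states this theorem without a separate proof, treating it as the specialization of Theorem~\ref{when-patterns} to the decoration alphabet $A=\Sym(X)$ under the identification of $\Aut(X^*)$ with the full portrait space $\Sym(X)^{X^*}$ (from which $\Aut(X^*)$ inherits its topology and section maps). You also correctly isolate the only point of care, namely that the group hypothesis is a standing assumption and is what makes (ii)$\Rightarrow$(i) yield a sub\emph{group}, in line with the preceding Proposition.
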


A group of tree automorphisms defined by a finite set of forbidden patterns is called a finitely constrained group (a more appropriate term would probably be a group of finite type, as in~\cite{grigorchuk:unsolved}, where this kind of groups were introduced, but this term seems to be already overused and so we will avoid it; the term finitely constrained group was used for the first time in~\cite{grigorchuk-n-s:oberwolfach2}).

\begin{example}
Consider again the patterns of size 2 given in Figure~\ref{f:d4} and interpret $\square$ as the trivial permutation of $X=\{0,1\}$ and $\blacksquare$ as the non-trivial permutation $(01)$. The group $\Aut(X^2)$ of tree automorphisms of the $X$-tree of depth 2 is isomorphic to the dihedral group $D_4$ and is generated by the automorphism $t$ of order 4 and the automorphism $a$ of order 2 (subject to the relation
$ata=t^3$). 

Note that, for self-similar groups of binary tree automorphisms, being infinite and being transitive on each level of the tree are equivalent properties (see~\cite{bondarenko-al:classification32-1} or~\cite[Lemma~3, page 112]{bondarenko-al:classification32}).   

The only proper transitive subgroups of $\Aut(X^2)$ are the groups $\{1,t,t^2,t^3\}$, with complement is $\R$, and the group $\{1,a,t^2,at^2\}$, with complement is $\B$. Thus, $\G(\R)$ and $\G(\B)$ are the only infinite, finitely constrained groups of binary tree automorphisms defined by forbidden patterns of size 2 (in addition to the full group $\Aut(X^*)$, which is defined by declaring the empty set to be the set of forbidden patterns). The group $\G(\B)$ appears explicitly in~\cite[page 174]{grigorchuk:unsolved} as one of the simplest nontrivial examples of finitely constrained groups (nontrivial in the sense that the group is neither finite nor $\Aut(X^*)$). 
\end{example}

\subsection{Pattern closure construction}

Given a self-similar group $K$ and we may construct the finitely constrained group $\G(\F_s(K))$ defined by the set of forbidden patterns $\F_s(K)$ of size $s$, which is simply the set of patterns of size $s$ that do not appear in any element of $K$. Theorem~\ref{t:contraction-survives} describes the finitely constrained groups that can be obtained by this pattern closure construction. 

Theorem~\ref{t:contraction-survives} may be seen as a refinement of the direction (ii) implies (i) of Theorem~\ref{t:when-fc} below, since its proof provides an explicit way to construct the group $H$ (and since $H$ is countable). In addition, Theorem~\ref{t:contraction-survives} shows that the contraction property is, in  a sense, compatible with the pattern closure construction. 

\begin{theorem}\label{t:when-fc}
Let $G$ be a group of tree automorphisms of $X^*$ and $s \geq 0$. The
following are equivalent.

\textup{(i)} The group $G$ is the closure of some self-similar, regular branch group $H$, branching over its level $s$ stabilizer $H_s$.

\textup{(ii)} The group $G$ is finitely constrained group defined by patterns of size $s+1$.
\end{theorem}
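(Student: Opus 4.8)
The plan is to prove the two implications separately. Throughout I use that the closure of a subgroup of $\Aut(X^*)$ is a subgroup and the closure of a self-similar set is self-similar; that, by the Proposition, the set $\EF_{s+1}$ of essential patterns of size $s+1$ of a closed self-similar \emph{group} is a subgroup of $\Aut(X^{s+1})$; the formulas for sections of products and inverses; and the elementary fact that $\EF_{s+1}(\overline{H})=\EF_{s+1}(H)$, since a pattern occurring in a portrait is detected by finitely many labels. I write $\psi_m\colon \mathrm{St}_{\Aut(X^*)}(X^m)\to\Aut(X^*)^{X^m}$, $g\mapsto (g|_v)_{v\in X^m}$, for the level $m$ section map, and read ``$H$ is regular branch over $K$'' as $[H:K]<\infty$ together with $\psi_1^{-1}(K\times\dots\times K)\subseteq K$.

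For the implication (ii) $\Rightarrow$ (i), put $G_s=\mathrm{St}_G(X^s)$. The first step is a branching lemma for $G$ itself, namely $\psi_1(G_s)\supseteq (G_s)^X$: given $g_0,\dots,g_{k-1}\in G_s$, let $g$ be the portrait with trivial root label and $g|_x=g_x$; at every vertex of positive level the size $s+1$ pattern of $g$ is a size $s+1$ pattern of some $g_x$, hence lies in $\EF_{s+1}(G)$, while at the root all labels of the size $s+1$ pattern of $g$ are trivial (each $g_x$ fixes $X^{[s+1]}$ pointwise, hence has trivial labels at levels $0,\dots,s-1$), and the trivial pattern lies in $\EF_{s+1}(G)$ because $1\in G$; thus $g\in G$, clearly $g\in G_s$, and $g=\psi_1^{-1}(g_0,\dots,g_{k-1})$. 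The second step produces $H$ by a countable closure process. Since $G$ is a closed subgroup of the compact metrizable group $\Aut(X^*)$ it is separable, so it has a countable dense subgroup $H^{(0)}$; let $H^{(n+1)}$ be generated by all sections of elements of $H^{(n)}$ together with all elements $\psi_1^{-1}(h_0,\dots,h_{k-1})$ with $h_0,\dots,h_{k-1}\in\mathrm{St}_{H^{(n)}}(X^s)$, and set $H=\bigcup_n H^{(n)}$. Then $H$ is countable, self-similar, and dense (it contains $H^{(0)}$), so $\overline{H}=G$; it lies in $G$ because by the branching lemma each adjoined element lies in $G_s$; $[H:H_s]$ divides $|\Aut(X^s)|$; and $\psi_1(H_s)\supseteq (H_s)^X$ by construction, since every adjoined $\psi_1^{-1}(h_0,\dots,h_{k-1})$ again fixes $X^{[s+1]}$ pointwise and so lies in $H_s=\mathrm{St}_H(X^s)$. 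Hence $H$ is a self-similar group, regular branch over $H_s$, whose closure is $G$.

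For the implication (i) $\Rightarrow$ (ii), set $\F=\Aut(X^{s+1})\setminus\EF_{s+1}(G)$, a finite set of size $s+1$ patterns; every $g\in G$ avoids $\F$, so $G\subseteq\G(\F)$ and the content is $\G(\F)\subseteq G$. Fix $g\in\G(\F)$; it suffices to find, for every $m$, an element $h\in H$ with $h_{(w)}=g_{(w)}$ for all $|w|<m$, for then these elements converge to $g$ and $g\in\overline{H}=G$. When $m\le s+1$, the size $s+1$ pattern of $g$ at the root occurs in $H$, so a suitable section (in $H$ by self-similarity) of a witnessing element works. For the inductive step, let $h\in H$ satisfy $h_{(w)}=g_{(w)}$ for all $|w|<m$ with $m\ge s+1$; I look for $e\in H$ fixing $X^{[m+1]}$ pointwise with $e_{(w)}=h_{(w)}^{-1}g_{(w)}$ for all $|w|=m$, since then $h'=he\in H$ satisfies $h'_{(w)}=g_{(w)}$ for all $|w|\le m$. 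Put $D=\EF_{s+1}(H)\cap\Sym(X)^{X^s}$; by self-similarity of $H$, $D$ coincides with $\{(h_{(u)})_{u\in X^s}:h\in H_s\}$. Now fix $v\in X^{m-s}$ and let $p_v,q_v\in\EF_{s+1}(H)$ be the size $s+1$ patterns of $h$ and $g$ at $v$. Since $\EF_{s+1}(H)$ is a group and $h_{(vu')}=g_{(vu')}$ for all $|u'|<s$, the element $p_v^{-1}q_v$ lies in $\EF_{s+1}(H)$, has trivial labels at levels $0,\dots,s-1$, and has level $s$ labels $(h_{(vu)}^{-1}g_{(vu)})_{u\in X^s}$; hence $p_v^{-1}q_v\in D$, so there is $k_v\in H_s$ with $(k_v)_{(u)}=h_{(vu)}^{-1}g_{(vu)}$ for all $u\in X^s$. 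Finally, iterating $\psi_1(H_s)\supseteq (H_s)^X$ yields $\psi_{m-s}^{-1}((k_v)_{v\in X^{m-s}})\in H$, and because each $k_v$ fixes $X^{[s+1]}$ pointwise this element fixes $X^{[m+1]}$ pointwise and has $e_{(w)}=h_{(w)}^{-1}g_{(w)}$ for all $|w|=m$; this is the required $e$.

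I expect the main obstacle to be exactly this inductive step: correcting the level $m$ labels of $h$ \emph{inside} $H$. It forces one to dovetail the two structural inputs. The regular branch property over $H_s$ determines which relabellings of level $m$ are realised by elements of $H$ — precisely those $(\pi_w)_{w\in X^m}$ for which, for each $v\in X^{m-s}$, the block $(\pi_{vu})_{u\in X^s}$ is a level $s$ labelling of some element of $H_s$ — while the group structure of $\EF_{s+1}(G)$, entering through the identification of the needed correction with the honest product $p_v^{-1}q_v$ of two size $s+1$ patterns, guarantees that the relabelling we must perform has exactly that shape. Keeping the level indexing straight and verifying the iterated branch inclusion $\psi_{m-s}^{-1}((H_s)^{X^{m-s}})\subseteq\mathrm{St}_H(X^m)$ for $m\ge s$ are the delicate points; the remainder is bookkeeping.
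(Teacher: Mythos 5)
Your argument is correct, but note that the paper does not actually prove Theorem~\ref{t:when-fc}: it defers (ii)$\Rightarrow$(i) to \cite[Proposition~7.5]{grigorchuk:unsolved} and (i)$\Rightarrow$(ii) to \cite[Theorem~3]{sunic:hausdorff}, so what you have written is a self-contained proof of a statement the paper only cites. Both directions check out. For (ii)$\Rightarrow$(i), the key observation --- that in a group defined by patterns of size $s+1$ one has $\psi_1^{-1}\bigl((G_s)^X\bigr)\subseteq G_s$, because inserting level-$s$-stabilizing elements below a trivial root label produces only the trivial pattern at the root and already-occurring patterns elsewhere --- is exactly the branching mechanism the paper exploits in its proof of Theorem~\ref{t:contraction-survives}; but where that proof builds an explicit countable $H$ from the generators $\delta_u(h)$ together with a self-similar set $\tilde S$, you start from a countable dense subgroup of $G$ and saturate under sections and under $\psi_1^{-1}$ of level-$s$-stabilizing tuples. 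Your version is less explicit but applies directly to an arbitrary finitely constrained $G$ defined by size $s+1$ patterns, not only to those arising as pattern closures of a self-similar $K$. For (i)$\Rightarrow$(ii), your successive-approximation argument --- correct the level-$m$ labels of an approximant inside $H$ by an element of $\psi_{m-s}^{-1}\bigl((H_s)^{X^{m-s}}\bigr)$, using that $\EF_{s+1}(G)$ is a subgroup of $\Aut(X^{s+1})$ to see that each correction block $p_v^{-1}q_v$ has trivial labels above level $s$ and hence is realized by some $k_v\in H_s$ --- is the standard route and is carried out correctly, including the identification $\EF_{s+1}(H)=\EF_{s+1}(\overline H)$ and the iterated branch inclusion. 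The only convention worth making explicit is the one you already adopt: ``regular branch over $H_s$'' must be read in the paper's weak sense (no level transitivity required), since with the usual transitivity requirement the equivalence would fail, for instance when $\G(\F)$ is trivial.
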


The direction (ii) implies (i) is proved in~\cite[Proposition~7.5]{grigorchuk:unsolved} and the other direction in~\cite[Theorem~3]{sunic:hausdorff}. Recall that a group $H$ is regular branch group over its level $s$ stabilizer $H_s$ if and only if for all $h_0,\dots,h_{k-1} \in H_s$ the tree automorphism $(h_0,h_1,\dots,h_{k-1})$ is also an element of $H_s$. 

The following example of the pattern closure construction plays a role in the proof of Theorem~\ref{t:no2}. 

\begin{example}\label{ex:odometers}
The group $\G(\R)$ from Example~\ref{ex:d4} is just one example in the family of finitely constrained groups defined by the pattern closure construction with respect to various sizes applied to the, so called, odometer group ($\G(\R))$ corresponds to size 2). 

The $k$-ary odometer automorphism $t$ of $X^*$ is defined by 
\[ t = \rho (1,1,\dots,1,t) \]
where $\rho =(0 \ 1 \ \dots \ k-1)$ is the standard cycle on the alphabet $X=\{0,\dots,k-1\}$. The group $T=\langle t \rangle$ is self-similar, contracting, level transitive group. 

For a fixed size $s+1$, $s \geq 0$, define $G(k,s+1)=\G(\F_{s+1}(T))$ as the finitely constrained group of $k$-ary rooted tree automorphisms for which the forbidden patters are precisely the patterns of size $s+1$ that do not appear in any element of $\langle t \rangle = T \cong \Z$. 
\end{example}


\section{Proofs of Theorem 1 and Theorem 2}

For a word $u$ over $X$ and a tree automorphism $f$, denote by $\delta_u(f)$ the unique tree automorphism that stabilizes level $|u|$ and has trivial section at each vertex at level $|u|$ except at $u$ where its section is equal to $f$. 

\begin{lemma}\label{l:delta-conjugates}
Let $h$ and $g$ be automorphisms of the tree $X^*$. For any vertex $u$, 
\[
 (\delta_u(h))^g = \delta_v(h^{g|_v}), 
\]
where $v=g^{-1}(u)$. 
\end{lemma}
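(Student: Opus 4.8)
The plan is to verify the identity $(\delta_u(h))^g = \delta_v(h^{g|_v})$ with $v = g^{-1}(u)$ directly, by checking that both sides agree as tree automorphisms — that is, that they have the same portrait, or equivalently the same section at every vertex. Both sides stabilize level $|u| = |v|$ (conjugation by $g$ preserves level stabilizers, since $g$ permutes each level), so it suffices to compare the sections at the vertices of level $|v|$.

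First I would recall the section formulas already displayed in the excerpt, namely $(f\cdot g)|_w = f|_{g(w)}\cdot g|_w$ and $(g^{-1})|_w = (g|_{g^{-1}(w)})^{-1}$, and apply them to $\delta_u(h)^g = g^{-1}\,\delta_u(h)\, g$. For a vertex $w$ of level $|v|$, compute
\[
 \bigl(g^{-1}\delta_u(h) g\bigr)\big|_w = (g^{-1})\big|_{\delta_u(h)(g(w))}\cdot \delta_u(h)\big|_{g(w)}\cdot g\big|_w.
\]
Now $\delta_u(h)$ stabilizes level $|v|$ and has trivial section everywhere on that level except at $u$, so $\delta_u(h)|_{g(w)}$ is trivial unless $g(w) = u$, i.e. unless $w = g^{-1}(u) = v$; and when it is nontrivial it equals $h$. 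Also $\delta_u(h)$ fixes $g(w)$ at its own level, so $\delta_u(h)(g(w)) = g(w)$, whence $(g^{-1})|_{\delta_u(h)(g(w))} = (g^{-1})|_{g(w)} = (g|_{w})^{-1}$. Substituting, the section at $w \neq v$ is $(g|_w)^{-1} g|_w = 1$, and the section at $w = v$ is $(g|_v)^{-1} h\, g|_v = h^{g|_v}$. This is exactly the section data of $\delta_v(h^{g|_v})$, so the two automorphisms coincide.

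I do not expect a serious obstacle here; the only points requiring a little care are (a) confirming that conjugation by $g$ maps the level-$|u|$ stabilizer to itself and hence $\delta_u(h)^g$ indeed lies in the level stabilizer (so that the characterization of $\delta_v(\cdot)$ as "trivial sections off a single vertex" applies), and (b) keeping the bookkeeping straight in the product-section formula, in particular the fact that $\delta_u(h)$ acts trivially on level $|u|$, which is what collapses $\delta_u(h)(g(w))$ back to $g(w)$ and makes the outer factor $(g^{-1})|_{g(w)}$ rather than something more complicated. Once those are in place the computation above is the whole proof.
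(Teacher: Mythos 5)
Your proposal is correct and follows essentially the same route as the paper: both verify that the conjugate stabilizes level $|u|$ and then compute its section at an arbitrary vertex of that level via the product-section formulas, using that $\delta_u(h)$ fixes $g(w)$ to collapse the outer factor to $(g|_w)^{-1}$ and that $\delta_u(h)|_{g(w)}$ is trivial unless $g(w)=u$. No gaps.
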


\begin{proof}
Let $|u|=n$ and $v$ be arbitrary vertex at level $n$. Since $\delta_u(h)$ stabilizes level $n$ of $X^*$, we have $g^{-1}\delta_u(h)g(v)= g^{-1}g(v) = v$. Thus $(\delta_u(h))^g$ stabilizes level $n$. Further, 
\begin{align*}
 \delta_u(h))^g|_v 
  &= (g^{-1} \delta_u(h) g)|_v = g^{-1}|_{\delta_u(h)g(v)} \cdot \delta_u(h)|_{g(v)} \cdot g|_v = \\
  &= g^{-1}|_{g(v)} \cdot \delta_u(h)|_{g(v)} \cdot g|_v 
   = (g|_{g^{-1}g(v)})^{-1} \cdot \delta_u(h)|_{g(v)} \cdot g|_v = \\ 
  &= (g|_v)^{-1} \delta_u(h)|_{g(v)} g|_v = (\delta_u(h)|_{g(v)})^{g|_v} = \\ 
  &= \begin{cases}
       h^{g|_v}, & g(v)=u \\
       1, & g(v) \neq u, 
     \end{cases} 
\end{align*}
showing that $(\delta_u(h))^g = \delta_v(h^{g|_v})$, where $v=g^{-1}(u)$. 
\end{proof}

\begin{proof}[Proof of Theorem~\ref{t:contraction-survives}]
Let $\P$ be the set of patterns of size $s+1$ appearing in the elements of the the self-similar group $K$. Because $K$ is self-similar, this is the set of patterns of size $s+1$ appearing at the root in the elements of $K$. Let $S=\{g_1,\dots,g_m\}$ be a set of elements in $K$ such that every pattern in $\P$ appears at the root in at least one of the automorphisms in $S$ (note that the set $\P$ is finite, so $S$ may be chosen to be finite as well).  Let $L=\langle \tilde{S} \rangle$ be the smallest self-similar group containing $S$ (this is a subgroup of $K$) and let $L_s=\langle S' \rangle$ be the stabilizer of level $s$ in $L$. Note that $\tilde{S}$ is countable (since $S$ is finite and every tree  automorphism has no more than countably many sections). Therefore $L$ is countable and so are $L_s$ and $S'$.  

Let 
\[
 D = \{\ \delta_u(h) \mid h \in S', \ u \in X^* \ \}
\]
and
\[
 H = \langle D \cup \tilde{S} \rangle.
\]
Note that $H$ is self-similar. Indeed, $H = \langle D \cup L \rangle$ and all sections of the elements in $D \cup  L$ are trivial or elements in the self-similar group $L$. Therefore $D \cup L$ is a self-similar set and $H$ itself is self-similar. 

We claim that $H_s = \langle D \rangle$. 

Since every element $h \in S'$ stabilizes $s$ levels of the tree $X^*$, $\delta_u(h)$ stabilizes $s+|u|$ levels. Therefore $\langle D \rangle$ is a subgroup of $H_s$. 

Further, by Lemma~\ref{l:delta-conjugates}, for any word $u$, $g \in \tilde{S}$ and $h \in S'$, $(\delta_u(h))^g = \delta_v(h^{g|_v})$, where $v=g^{-1}(u)$. Since $L_s$ is normal in $L$, there exist $h_1,\dots,h_r \in S'$ and expnonents $\epsilon_1,\dots,\epsilon_r$ in $\{-1,1\}$ such that $h^{g|_v} = h_{1}^{\epsilon_1} \dots h_{r}^{\epsilon_r}$. Therefore 
\[
 (\delta_u(h))^g = \delta_v(h^{g|_v}) = \delta_v(h_{1}^{\epsilon_1} \dots h_{r}^{\epsilon_r}) = 
 \delta_v(h_{1})^{\epsilon_1} \dots \delta_v(h_{r})^{\epsilon_r}.
\]  
The last equality shows that the group $\langle D \rangle$ is normal subgroup of $H$. 

Since $\langle D \rangle$ is normal in $H = \langle D \cup \tilde{S} \rangle$, any element of $H_s$ can be written as a product of an element in $\langle D \rangle$ and an element in $\langle \tilde{S} \rangle = L$ stabilizing $s$. But the generators of $L_s$ are in $\langle D \rangle$, which shows that $H_s = \langle D \rangle$. 

The group $H$ is a regular branch group, branching over its stabilizer $H_s$ of level $s$. This is clear since, for any words $u_0,\dots,u_{k-1}$ in $X^*$, and elements $h_0,\dots,h_{k-1}$ in $S'$, 
\begin{align*}
 (\delta_{u_0}(h_{0}),\dots,\delta_{u_{k-1}}(h_{{k-1}})) 
  &= (\delta_{u_0}(h_{0}),1,\dots,1) \cdots (1,\dots,\delta_{u_{k-1}}(h_{{k-1}})) = \\ 
  &= \delta_{0u_0}(h_{0}) \cdots \delta_{(k-1)u_{k-1}}(h_{{k-1}}) \in H_s.
\end{align*}

By Theorem~\ref{t:when-fc}, the closure $\overline{H}$ is a finitely constrained group, defined by patterns of size $s+1$. Moreover, since $H$ is self-similar, the patterns defining $\overline{H}$ are the patterns of size $s+1$ appearing at the root in the elements of $H$. Since, for nonempty words $u$ and $h \in S'$, $\delta_u(h)$ stabilizes level $s+1$, the patterns of size $s+1$ appearing at the root in the elements of $H$ are precisely the patterns of size $s+1$ appearing at the root of the elements in $L$, and these are the patterns defining $G$. Therefore $\overline{H}=G$.

Assume now, in addition, that $K$ is contracting over the finite set $\mathcal N$. Redefine $S$ in the above construction so as to include $\mathcal N$. Then $S$ is finite and, because of the contraction property, so is $\tilde{S}$. Thus $L$ is in this case an automaton group that is contracting over $\mathcal N$. The group $H$ is also contracting over $\mathcal N$ since, for $h \in S'$ each section of $\delta_u(h)$ at level $|u|$ is an element of $L$ and $L$ is contracting over $\mathcal N$.
\end{proof}

Note that in the contracting case $\tilde{S}$ is finite and since $L=\langle \tilde{S} \rangle$ is finitely generated so is its finite index subgroup $L_s$. This means that $S'$ may be chosen to be finite as well. Further, in some situations the set $D = \{ \delta_u(h) \mid h \in S', \ u \in X^* \}$ in the definition of $H$ may be replaced by some subset such as, for instance, $D' = \{ \delta_{0^n}(h) \mid h \in S', \ n=0,1,\dots\}$. 

The claim of Theorem~\ref{t:no2} follows if we prove than none of the groups $\Aut(X^*)$, $\G(\B)$, and $\G(\R)$ is topologically finitely generated. This is known for $\Aut(X^*)$ (see~\cite{grigorchuk:unsolved}), the claim for $\G(\R)$ is proved in more general form in Proposition~\ref{p:odometers}, and the claim for $\G(\B)$ is proved in Proposition~\ref{p:B}. 

\begin{proposition}\label{p:odometers}
The finitely constrained group $G(k,s+1)$ (defined in Example~\ref{ex:odometers} by allowing the patterns of size $s+1$ that appear in the odometer group) is not topologically finitely generated, for $k \geq 2$, $s \geq 0$.
\end{proposition}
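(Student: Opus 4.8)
The plan is to derive the statement from a self‑replicating property of the level‑$s$ stabilizer of $G:=G(k,s+1)$, together with the Schreier index formula and the Burnside basis theorem. I would first dispose of the case $s=0$, which is immediate: $G(k,1)$ is the group of all tree automorphisms whose vertex decorations all lie in $\langle\rho\rangle$, the level–sum maps $\sigma_i\colon g\mapsto\sum_{|u|=i}g_{(u)}\in\Z/k$ are continuous homomorphisms onto $\Z/k$ for every $i\ge 0$, and they are independent (the automorphism carrying a single nontrivial decoration $\rho$ at one vertex of level $i$ is detected only by $\sigma_i$), so $(\sigma_i)_{i\ge 0}$ maps $G(k,1)$ onto the compact group $\prod_{i\ge 0}\Z/k$, which is not topologically finitely generated. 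Assume $s\ge 1$ from now on.

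By Theorem~\ref{t:contraction-survives} the group $G$ is the closure of a self-similar group $H$ that is regular branch over $H_s$; passing to closures, $G$ is regular branch over its own level‑$s$ stabilizer $G_s=\overline{H_s}$, which is an open subgroup with $[G:G_s]=k^{s}$. Since an open subgroup of a topologically finitely generated profinite group is again topologically finitely generated, it is enough to show that $G_s$ is not. The structural fact I would establish is
\[
\prod_{x\in X}\delta_x(G_s)\;\cong\;\underbrace{G_s\times\cdots\times G_s}_{k\text{ factors}}\qquad\text{is a subgroup of }G_s\text{ of index }k .
\]
Regular branching gives $\delta_x(h)\in G_s$ for $h\in G_s$, and the subgroups $\delta_x(G_s)$ commute and meet trivially, which yields the inclusion and the isomorphism. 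For the index one uses the homomorphism $\phi\colon G_s\to (G_{s-1}/G_s)^X$, $g\mapsto (g|_x\,G_s)_{x\in X}$, whose kernel is exactly $\prod_x\delta_x(G_s)$; a short computation with the odometer shows $G_{s-1}/G_s\cong\Z/k$ and that $\operatorname{im}\phi$ is the diagonal. (The point is that the size‑$(s+1)$ patterns occurring in $\langle t\rangle$ and trivial on the top $s$ levels are precisely those of $t^{k^{s}j}=(t^{j},\dots,t^{j})$, so every element of $G_s$ has constant level‑$s$ decoration $(\rho^{j},\dots,\rho^{j})$, and $g\mapsto j$ induces the required quotient.)

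Now suppose $G_s$ is topologically finitely generated; write $d(Q)$ for the minimal number of topological generators of a profinite group $Q$ and set $d:=d(G_s)<\infty$. When $k$ is a prime power, $G_s$ is pro‑$p$ and the argument closes immediately: by the Schreier formula the index‑$k$ subgroup $\prod_x\delta_x(G_s)\cong G_s^{\,k}$ is topologically generated by at most $1+k(d-1)$ elements, whereas the Burnside basis theorem applied in the pro‑$p$ group $G_s^{\,k}$ gives $d(G_s^{\,k})=k\,d(G_s)=kd>1+k(d-1)$ — a contradiction. For arbitrary $k$ one fixes a prime $p\mid k$ and runs the same comparison in the maximal pro‑$p$ quotient $(G_s)_{\widehat p}$, which is nontrivial because the level‑$s$ decoration map sends $G_s$ onto $\Z/k$ and hence onto $\Z/p$: since $\prod_x\delta_x(G_s)$ surjects coordinatewise onto $\bigl((G_s)_{\widehat p}\bigr)^{k}$, Schreier and Burnside give $k\,d\bigl((G_s)_{\widehat p}\bigr)\le 1+k(d-1)$, so $d(G_s)\ge 1+d\bigl((G_s)_{\widehat p}\bigr)$, and it remains to prove that $(G_s)_{\widehat p}$ — equivalently, that $\operatorname{Hom}(G_s,\mathbb{F}_p)$ — is infinite dimensional. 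I expect this last step to be the main obstacle. It should be accessible through the rigid generators furnished by the proof of Theorem~\ref{t:contraction-survives}: by Lemma~\ref{l:delta-conjugates} the image $\beta_n$ of $\delta_{0^{n}}(t^{k^{s}})$ in $G_s^{\mathrm{ab}}$ depends only on $n$, and the identities $\delta_{0^{n}}(t^{k^{s}})^{k}=\prod_{x\in X}\delta_{0^{n}x}(t^{k^{s}})$ force $k\beta_n=k\beta_{n+1}$; the work is to show that no further relations collapse the order‑$k$ elements $\beta_n-\beta_{n+1}$, so that they span an infinite direct sum — which a count of homomorphisms on the finite congruence quotients $G/G_m$ should confirm.
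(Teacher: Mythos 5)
Your argument takes a genuinely different route from the paper's (which exhibits an explicit countable dense subgroup $H=\langle t,t_{s+1},t_{s+2},\dots\rangle$ and shows, by an inductive divisibility argument on exponent sums, that the finite quotients $G/G_n=H/H_n$ surject onto abelian groups $C_{k^{s+1}}\times C_k^{\,n-s-1}$ of rank $n-s$), and much of it is correct. The $s=0$ case via the level-sum homomorphisms is fine; the structural facts for $s\ge 1$ check out (that $G$ is regular branch over $G_s=\overline{H_s}$, that $\ker\phi=\prod_{x}\delta_x(G_s)\cong G_s^{\,k}$, and that the image of $\phi$ is the diagonal in $(G_{s-1}/G_s)^X\cong(\Z/k)^X$, so that $G_s^{\,k}$ sits inside $G_s$ with index exactly $k$); and when $k$ is a prime power $G_s$ is pro-$p$, so Schreier plus Burnside give $k\,d\le 1+k(d-1)$, i.e.\ $k\le 1$, a clean contradiction. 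For prime-power $k$ this is a complete and arguably slicker proof than the paper's.

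For $k$ with at least two prime divisors, however, there is a genuine gap, and it sits exactly where you suspect. The inequality you extract in the maximal pro-$p$ quotient, $k\,d\bigl((G_s)_{\widehat p}\bigr)\le 1+k(d-1)$, only yields $d\bigl((G_s)_{\widehat p}\bigr)\le d-1$, which is perfectly consistent and produces no contradiction; there is no general lower bound of the form $d(Q^k)\ge k\,d(Q)$ for profinite groups that are not pro-$p$, so the Schreier/Burnside comparison cannot be closed at this level. Your fallback --- proving that $\operatorname{Hom}(G_s,\mathbb{F}_p)$ is infinite-dimensional --- would indeed finish the proof, but it would do so on its own (an infinitely generated continuous quotient makes the index computation and the Schreier bound redundant), and it is precisely the content of the paper's proof: the homomorphisms $\beta_n$ on the congruence quotients, whose well-definedness is the real work, are exactly the ``count of homomorphisms on the finite congruence quotients'' that you defer. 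As written, that step is a plan rather than a proof, so the proposal does not establish the proposition for, e.g., $k=6$.
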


\begin{proof}
We first explicitly determine a self-similar, countable, regular branch group $H$, branching over its level stabilizer $H_s$, such that $G$ is the closure of $H$ in $\Aut(X^*)$. In order to accomplish this we follow the argument in the proof of Theorem~\ref{t:contraction-survives} (we follow the argument somewhat loosely, since in the concrete situation some of simplifications, as indicated in the remarks after the proof of Theorem~\ref{t:contraction-survives}, apply).

The role of $L$ may be played by $T$ itself. The stabilizer $T_s$ of level $s$ in $T$ is generated by $t^{k^s}$. Define $t_s = t^{k^s}$ and, for $n \geq s$,
\[ t_{n+1} = (1,1,\dots,t_n). \]

Let $H=\langle t, t_{s+1},\dots \rangle$. Then, for the level $s$ stabilizer in $H$, we have
\[
 H_s = \langle \ t_n^{t^i} \mid \ n \geq s, \ i=0,\dots,k^{n-s}-1 \ \rangle
\]

The closure $\overline{H}$ of $H$ in $\Aut(X^*)$ is precisely $G$. This implies that $G/G_n = H/H_n$, for $n \geq 0$.

Therefore, in order to show that $G$ is not topologically finitely generated, it is sufficient to show that, for $n \geq s+1$, the minimal number of generators of $H_{[n]} = H/H_n$ is $n-s$.

For $n \geq s+1$, let $A_n = C_{k^{s+1}} \times C_k \times \dots \times C_k$, where $C_m$ denotes the standard cyclic group of order $m$ (the elements are the residue classes modulo $m$) and the total number of factors is $n-s$. We claim that, for $n \geq s+1$, there exists a surjective homomorphism from $H_{[n]}$ to $A_n$.

First, since the generators $t_n, t_{n+1}, \dots$ stabilize level $n$, the group $H_{[n]}$ is generated by (the cosets of) $\{t, t_{s+1}, \dots, t_{n-1}\}$. Define a map $\beta_n$ from the set of group words over $\{t,t_{s+1},\dots,t_{n-1} \}$ to $A_n$ by setting
\[ 
 \beta_n(W) = (\expn_t(W),\expn_{t_{s+1}}(W),\dots, \expn_{t_{n-1}}(W)),
\]
where $\expn_{t_*}(W)$ denotes the total expnonent of the letter $t_*$ in $W$. We claim that the map $\beta_n$ represents a surjective homomorphism from $H_{[n]}$ to $A_n$. The surjectivity and the homomorphism property follow trivially, once we show that $\beta_n$ is well defined (as a map from $H_{[n]}$). Therefore, we need to show that, for every group word $W$ over $\{t, t_{s+1}, \dots, t_{n-1}\}$ representing the identity in $H_{[n]}$ (i.e. every group word $W$ representing an element in the stabilizer $H_n$), $\beta_n(W) = (0,0,\dots,0)$. We do this by induction on $n$.

For $n=s+1$, $H_{[s+1]} = \langle t \rangle$ and, since the smallest power of $t$ stabilizing level $s+1$ is $t^{k^{s+1}}$, any group word over $\{t\}$ representing the identity in $H_{[s+1]}$ is a power of $t^{k^{s+1}}$.

Let $n > s+1$ and assume that the inductive claim is true for $n-1$. Let $W$ be a group word over $\{t,t_{s+1},\dots,t_{n-1}\}$ representing the identity in $H_{[n]}$. In particular, the word $W$ must represent an element of the level stabilizer $H_{s+1}$. Since all generators $t_{s+1},t_{s+2},\dots$ stabilize level $s+1$, we conclude that $\expn_t(W)$ must be divisible by $k^{s+1}$. Let $W_0,W_1, \dots, W_{k-1}$ be the group words over $\{t,t_{s+1},\dots,t_{n-2}\}$ obtained by decomposition from the word $W$. Since $W$ represents the identity in $H_{[n]}$ (i.e., it stabilizes level $n$), the words $W_i$, $i=0,\dots,k-1$, represent the identity in $H_{[n-1]}$ (i.e., they stabilize level $n-1$). We
have
\begin{align}
 \expn_{t}(W_0) + \dots + \expn_{t}(W_{k-1}) &= \expn_{t}(W) +
 k^s\expn_{t_{s+1}}(W), \notag \\
 \expn_{t_{s+1}}(W_0) + \dots + \expn_{t_{s+1}}(W_{k-1}) &= \expn_{t_{s+2}}(W), \notag \\
 \dots \label{e:expnonents} \\
 \expn_{t_{n-2}}(W_0) + \dots + \expn_{t_{n-2}}(W_{k-1}) &=
 \expn_{t_{n-1}}(W). \notag
\end{align}
By the induction hypothesis, for $i=0,\dots,k-1$, $\expn_t(W_i)$ is divisible by $k^{s+1}$, while $\expn_{t_j}(W_i)$ is divisible by $k$, for $j=s+1,\dots,n-2$. Since $\expn_t(W)$ is also divisible by $k^{s+1}$ we conclude from the first equality in~\eqref{e:expnonents} that $\expn_{t_{s+1}}(W)$ is divisible by $k$. The other equalities in~\eqref{e:expnonents} imply that $\expn_{t_{s+2}}(W)$, \dots, $\expn_{t_{n-1}}(W)$ are divisible by $k$.

Since, for $n \geq s+1$, the abelian group $A_n$ has rank $n-s$ and $\beta_n: H_{[n]} \to A_n$ is a surjective homomorphism, we conclude that the closure $\overline{H}=G$ is not topologically finitely generated.
\end{proof}

\begin{proposition}\label{p:B}
The finitely constrained group $G(\B)$ (defined in Example~\ref{ex:d4}) is not topologically finitely generated.
\end{proposition}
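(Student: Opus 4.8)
The plan is to show that $\G(\B)$ admits continuous surjections onto arbitrarily large elementary abelian $2$-groups, so that it cannot be topologically finitely generated: if $G=\overline{\langle S\rangle}$ with $S$ finite and $\phi\colon G\to F$ is a continuous surjection onto a finite group, then $F=\phi(\overline{\langle S\rangle})\subseteq\overline{\langle\phi(S)\rangle}=\langle\phi(S)\rangle$, so every finite continuous quotient of $G$ is generated by $|S|$ elements. Recall from Example~\ref{ex:d4} that, identifying the decoration alphabet $\Sym(\{0,1\})$ with $\Z/2$, an automorphism $g$ lies in $\G(\B)$ precisely when its decoration is constant on each sibling pair, i.e. $g_{(u0)}=g_{(u1)}$ for every vertex $u$. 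For $n\ge 1$ I would set
\[
 \tau_n(g)=\sum_{w\in X^{n-1}} g_{(w0)} \pmod 2,
\]
the sum of the decorations of $g$ over the sibling pairs at level $n$ (one summand per pair), and $\tau_0(g)=g_{(\emptyset)}$.

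The first step is to check that each $\tau_n\colon\G(\B)\to\Z/2$ is a homomorphism. For any tree automorphisms one has $(gh)_{(v)}=g_{(h(v))}\cdot h_{(v)}$ at every vertex $v$, hence $(gh)_{(v)}=g_{(h(v))}+h_{(v)}$ in $\Z/2$ when $v$ is at level $n$. Summing over the $0$-children of the level-$(n-1)$ vertices gives $\tau_n(gh)=\bigl(\sum_{w\in X^{n-1}}g_{(h(w0))}\bigr)+\tau_n(h)$. Writing $h(w0)=h(w)\cdot h_{(w)}(0)$ and using that $g\in\G(\B)$ is constant on the sibling pair above $h(w)$, the term $g_{(h(w0))}$ equals $g_{(h(w)0)}$ regardless of the value of $h_{(w)}(0)$; and as $w$ runs over $X^{n-1}$ the vertex $h(w)$ runs bijectively over $X^{n-1}$, so $\sum_{w}g_{(h(w0))}=\sum_{v\in X^{n-1}}g_{(v0)}=\tau_n(g)$. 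Thus $\tau_n(gh)=\tau_n(g)+\tau_n(h)$. Since $\tau_n$ depends only on the decorations of $g$ at levels $\le n$, it factors through the finite quotient $\G(\B)/\G(\B)_{n+1}$ and is therefore continuous.

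The second step is to combine these. The map $(\tau_0,\dots,\tau_n)$ descends to a homomorphism $\G(\B)/\G(\B)_{n+1}\to(\Z/2)^{n+1}$ which is surjective, because the value of $\tau_j$ can be prescribed by choosing the decorations at level $j$ alone (for instance, put the nontrivial permutation on a single sibling pair, or on none), independently across levels. Hence the minimal number of generators of $\G(\B)/\G(\B)_{n+1}$ is at least $n+1$, which by the remark in the first paragraph shows $\G(\B)$ is not topologically finitely generated. (Equivalently, the $\tau_n$ assemble into a continuous epimorphism $\G(\B)\to\prod_{i\ge 0}\Z/2$, and a topologically finitely generated profinite abelian group of exponent $2$ is finite, since a finitely generated torsion abelian group is finite and finite subsets of a Hausdorff group are closed.)

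The one genuinely delicate point is the homomorphism property of $\tau_n$ — specifically the observation that the permutation of sibling pairs induced by $h$ permutes the summands of $\tau_n(g)$ without changing their sum, which relies on both $g$ and $h$ lying in $\G(\B)$; the continuity, surjectivity, and profinite-generation inputs are routine. As an alternative one could instead run the construction of Theorem~\ref{t:contraction-survives} explicitly, taking $K=\langle b,c\rangle\cong\Z/2\times\Z/2$ where $b$ has every decoration nontrivial and $c=(b,b)$ (so that the size-$2$ patterns occurring in $K$ are exactly those allowed by $\B$), produce the branch group $H$ with $\overline H=\G(\B)$, and then mimic the computation in Proposition~\ref{p:odometers}; but the direct argument via the invariants $\tau_n$ is shorter and self-contained.
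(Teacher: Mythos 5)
Your argument is correct, and it takes a genuinely different route from the paper. The paper follows the template of Proposition~\ref{p:odometers}: it first realizes $\G(\B)$ as the closure of an explicit countable regular branch group $H=\langle a,a_1,a_2,\dots\rangle$ with $a=(01)(1,1)$, $a_1=(a,a)$, $a_{n+1}=(1,a_n)$, and then shows by induction on $n$ (via the exponent relations obtained by decomposing a word $W$ into $W_0,W_1$) that the exponent-sum map $\beta_n(W)=(\expn_a(W),\expn_{a_1}(W),\dots,\expn_{a_{n-1}}(W))$ is a well-defined surjection $H/H_n\to C_2^n$. You instead work directly with the closed group $\G(\B)$ and define the continuous characters $\tau_n(g)=\sum_{w\in X^{n-1}}g_{(w0)}$; the key computation, that $h$ permutes the sibling pairs at level $n$ while the $\B$-constraint on $g$ makes $g_{(h(w0))}$ independent of which child of $h(w)$ the vertex $w0$ is sent to, is carried out correctly (in fact it uses only $g\in\G(\B)$), and the surjectivity of $(\tau_0,\dots,\tau_n)$ is clear since the decorations of an element of $\G(\B)$ may be chosen freely level by level subject only to constancy on sibling pairs. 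Your approach buys a shorter, induction-free proof that never needs the dense countable subgroup $H$ or the well-definedness argument for $\beta_n$; the paper's approach is less self-contained here but fits the uniform scheme it also uses for the odometer groups $G(k,s+1)$, where the analogous invariants are not pure exponent-$2$ characters and the inductive divisibility bookkeeping of~\eqref{e:expnonents} is genuinely needed. Two minor remarks: your reduction ``a finite continuous quotient of $\overline{\langle S\rangle}$ is generated by $\phi(S)$'' is exactly the (implicit) reduction the paper uses via $G/G_n=H/H_n$; and your closing alternative construction with $K=\langle b,c\rangle$ is a valid instance of Theorem~\ref{t:contraction-survives}, though unnecessary given the direct argument.
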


\begin{proof}
The proof follows the general outline of the Proof of Proposition~\ref{p:odometers}.

The role of a self-similar, countable, regular branch group $H$, branching over its first level stabilizer $H_1$, such that $G=\G(\B)$ is the closure of $H$ in $\Aut(X^*)$ is played by $H=\langle a,a_1,a_2,a_3,dots \rangle$ (and the role of $L$ by $\langle a,a_1 \rangle$), where 
\[ a = (01) (1,1) , \qquad\qquad a_1 = (a,a) \]
and for $n \geq 2$,
\[ a_{n+1} = (1,a_n). \]
Every generator of $H$ has order 2.

The group $H_{[n]} = H/H_n$, $n \geq 1$, is generated by (the cosets of) $\{a,a_1,a_2,\dots,a_{n-1}\}$. The map $\beta_n: H_{[n]} \to A_n$, where $A_n = C_2^n$, defined by
\[
 \beta_n(W) = (\expn_a(W), \expn_{a_1}(W),\dots,\expn_{a_{n-1}}(W))
\]
is a surjective homomorphism, which shows that $\overline{H} = \G(\B)$ is not topologically finitely generated. 

Indeed, to show that $\beta_n$, for $n \geq 1$, is a surjective homomorphism it suffices to show that it is well defined, i.e., it suffices to show that for a group word $W$ over $\{a,a_1,\dots,a_{n-1}\}$ representing an element in $H_n$ the exponent $\expn_a(W)$ and the exponents $\expn_{a_i}(W)$, $i=1,\dots,n-1$, are even. This can be accomplished by induction on $n$. The claim is clear for $n=1$, since $a$ is the only generator that does not stabilize level 1. In fact, $\exp_a(W)$ must be even for any group word over $\{a,a_1,a_2,\dots\}$ stabilizing at least one level of the tree. Assume that $n \geq 2$ and the claim is correct for $n-1$. For any group word $W$ over ${a,a_1,\dots,a_{n-1}}$ representing an element in $H_n$, let the words $W_0$ and $W_1$ be the group words over $\{a,a_1,\dots,a_{n-1}\}$ obtained by decomposition. These words represent elements in $H_{n-1}$ and the induction hypothesis applies. Since 
\begin{align}
 \expn_{a}(W_0) &= \expn_{a_1}(W), \notag \\
 \expn_{a_1}(W_0) + \expn_{a_1}(W_1) &= \expn_{a_2}(W), \notag \\
 \dots \label{e:expnonents2} \\
 \expn_{a_{n-2}}(W_0) + \expn_{a_{n-2}}(W_1) &= \expn_{a_{n-1}}(W). \notag
\end{align}
and all exponents on the left are even, all exponent on the right are even as well, completing the proof. 
\end{proof}

\section{A remark on level transitivity}

We observed in Example~\ref{ex:d4} that only the transitive subgroups od $\Aut(X^2)$ may lead to infinite (and level transitive) finitely constrained subgroups of $\Aut(X^*)$. Here we provide an example that shows that not all transitive subgroups of $\Aut(X^s)$ yield spherically transitive (or even nontrivial) finitely constrained groups.

\begin{example}
Let $X=\{0,1\}$. Forbid all patterns of size 3 that contain a pattern from $\R$ as the sub-pattern at the root and all patterns that contain a pattern from $\B$ in any of the two bottom sub-patterns of size 2. Let $\F$ be the set of forbidden patterns of size 3 we just defined. There are 16 allowed patterns (the sub-pattern of size 2 at the top comes from $\langle t \rangle$ and the two sub-patterns in the bottom come from $\langle a,t^2 \rangle$). The group of allowed patterns (isomorphic to $C_4 \ltimes (C_2 \times C_2)$) acts transitively on $X^3$. 

A binary tree automorphism $g$ belongs to the finitely constrained group $\G(\F)$ if and only if, for every word $u$ over $X$ and every letter $x$ in $X$,
\[
 g_{(u0)} + g_{(u1)}+g_{(u)}=0, \qquad\text{and}\qquad
 g_{(ux0)} = g_{(ux1)},
\]
where the equalities are considered modulo 2, the trivial permutation on $X$ is regarded as 0 and the non-trivial as 1. It is easy to check that only the trivial automorphism satisfies the above requirements, i.e., $\G(\F) = 1$. Indeed, assume that, for some $u \in X^*$,  $g_{(u)}=1$. Then exactly one of $g_{(u0)}$ or $g_{(u1)}$ must be equal to 1. Without loss of generality, assume $g_{(u0})=1$. The conditions $g_{(u0})=1$, $g_{(u00)}+g_{(u01)}+g_{(u0)}=0$ and $g_{(u00)} = g_{(u01)}$ contradict each other.
\end{example}

\newcommand{\etalchar}[1]{$^{#1}$}
\def\cprime{$'$}


\end{document}